\newcommand{\IN}{\mathbb{N}}
\newcommand{\IE}{\mathbb{E}}
\newcommand{\IP}{\mathbb{P}}
\newcommand{\sP}{\mathcal{P}}
\newcommand{\sD}{\mathcal{D}}
\newcommand{\sA}{\mathcal{A}}
\newcommand{\sV}{\mathcal{V}}
\newtheorem{theorem}{Theorem}[section]
\newtheorem{proposition}[theorem]{Proposition}
\newtheorem{corollary}[theorem]{Corollary}
\theoremstyle{definition}
\newtheorem{remark}[theorem]{Remark}
\newtheorem{notation}[theorem]{Notation}
\newtheorem{definition}[theorem]{Definition}
\newtheorem{examples}[theorem]{Examples}
\newcommand{\bew}{\noindent\textbf{Proof:}\quad}
\newcommand{\ebew}{\hfill\qed\\}
\newsavebox{\prN}
\renewenvironment{proof}{\bew}{\ebew}
\newenvironment{balign*}[1][12pt]{\setlength{\jot}{#1}\nonumber\align}{\endalign}
\renewcommand{\epsilon}{\varepsilon}
\newcommand{\dotsfi}{\quad\leaders\hbox to 25pt{\hss\tiny.\hss}\hfill}
\title{Effectiveness, Decisiveness and Success\\ in Weighted Voting Systems }
\author{Werner Kirsch \\
Fakult\"{a}t f\"{u}r Mathematik und Informatik\\
FernUniversit\"{a}t in Hagen,
Germany}
\begin{document}
\maketitle

\section{Introduction}
The notions effectiveness, decisiveness and success are basic to the analysis of voting systems. Yet, they do not only depend on the voting rule but also on the underlying voting
measure, i.\,e. on the correlation of the voting behavior between the voters of the system. In the  Penrose-Banzhaf case the voting measure gives equal probability to all coalitions, thus
reflecting the situation when each voter's decision is completely independent of the other voters. The corresponding power index (in terms of decisiveness)  is the well known Penrose-Banzhaf index.
Under this voting measure there is a simple formula connecting the power index of a voter with the probability of success of this voter (see \eqref{eq:  DS} below).

As was emphasized in \cite{Laruelle}, this intimate connection between decisiveness and success is a peculiarity of the Penrose-Banzhaf measure. In particular there is no
analog for the Shapley-Shubik power index. The Shapley-Shubik index is based on decisiveness under a voting measure we call the Shapley-Shubik measure. Under this measure all coalitions
of a given size $k$ have the same ($k$-dependent) probability and the set of all coalitions of size $k$ is given a probability \emph{independent} of $k$.

Among others we consider voting systems with `simple voting rule', that is with voting weight $1$ for all voters, but with arbitrary relative quota $r$.
For such systems we compute the probability of decisiveness and the rate of success under the Penrose-Banzhaf measure and the Shapley-Shubik measure. One of the results which
we found surprising is that the rate of success under the Shapley-Shubik measure is (approximately) $\frac{3}{4}$ in the case of simple majority (i.\,e. $r=\frac{1}{2}$). Thus it is
remarkably bigger than the rate of success under the Penrose-Banzhaf measure. On the other hand the rate of decisiveness in the same situation is bigger under the Penrose-Banzhaf measure.

We extend the mentioned result to general $r$, to weighted voting systems and to more general voting measures.

\section{Some Basics}
IN this section we introduce some of the concepts basic for the rest of this paper. For a thorough introduction we recommend \cite{Felsenthal}, for an overview \cite{Taylor} or \cite{KirschBer}.
\begin{definition}
A $\emph{voting system} ~  (V,\mathcal{V})$ consists of a (finite) set $V$ of voters and a subset $\mathcal{V}$ of $\mathcal{P} (V)$, the system of all subsets of $V$, with the following properties
\begin{enumerate}
\item $V \in \mathcal{V}$
\item $\emptyset \notin \mathcal{V}$
\item If $A \in \mathcal{V}$ and $A \subset B$ then $B \in \mathcal{V}$.
\end{enumerate}
Subsets of $V$ are called $\emph{coalitions}$. The coalitions in $\mathcal{V}$ are called $\emph{winning}$, those not in $\mathcal{V} ~ \emph{losing}$.
\end{definition}
\begin{definition}
A voting system $(V,\mathcal{V})$ is called $\emph{weighted}$ if there is a function $w: V \rightarrow [0, \infty)$ and a $q \in (0, \infty)$ such that $A \in \sV$ if and only if
\begin{align}
\sum_{v \in A} w(v) \geq q
\end{align}
The number $w(v)$ is called the weight of the voter $v$, $q$ is called the $\emph{quota}$. The number
\begin{align}
r = \frac{q}{\sum_{v \in V} w(v)}
\end{align}
is called the $\emph{relative quota}$.\\
We call a weighted voting system $\emph{simple}$, if $w(v) = 1$ for all $v$.\\
A simple voting system $(V,\mathcal{V})$ is called a \emph{simple majority system} if the relative quota $r$ is given by $r = \frac{1}{2} + \frac{1}{2N}$, where $N = |V|$ is the number of voters. In other words, those coalitions are winning which contain more than half of the voters.
\end{definition}
There are various methods to quantify the notion `voting power' in voting systems.\\
One of the best known concepts goes back to Penrose $\cite{Penrose}$ and Banzhaf $\cite{Banzhaf}$. It is based on the notion of `decisiveness' and the treatment of all coalitions as `equally likely'.
\begin{definition}
Suppose $(V,\mathcal{V})$ is a voting system.
\begin{enumerate}
\item We call a voter $v \in V ~  \emph{winning decisive}$ for a coalition $A \subset V$ if $v \notin A, A \notin \mathcal{V}$ and $A\cup \{v\} \in \mathcal{V}$.\\
We set
\begin{align}
\sD^{+} (v) := \{A \subset V \mid v \notin A, A \notin \mathcal{V}, A\cup \{v\} \in \mathcal{V}\}
\end{align}
\item We call a voter $v \in V ~  \emph{losing decisive}$ for a coalition $A \subset V$ if $v \in A, A \in \mathcal{V}, A \setminus \{v\} \notin \mathcal{V}$.\\
We set
\begin{align}
\sD^- (v) := \{ A \subset V \mid v \in A, A \in \mathcal{V}, A \setminus \{v \} \notin \mathcal{V} \}
\end{align}
\item We call $v ~  \emph{decisive}$ for $A$ if $v$ is winning decisive or losing decisive for $A$ and set
\begin{align}
\sD (v) := \sD^+ (v) \cup \sD^- (v)
\end{align}
\end{enumerate}
\end{definition}
\begin{definition}
The $\emph{Penrose-Banzhaf power} ~  PB (v)$ of a voter $v$ is defined as
\begin{align}
D_B (v) = \frac{|\sD (v)|}{2^N}
\end{align}
where $|A|$ denotes the number of elements of the set $A$ and $N = |V|$.
\end{definition}
This $PB (v)$ is the proportion of all coalition for which $v$ is decisive.
\begin{remark}
It is well known and easy to see that $|\sD^+ (v)| = |\sD^- (v)|$, so
\begin{align}
D_B (v) = \frac{|\sD^+ (v)|}{2^{N-1}} = \frac{|\sD^- (v)|}{2^{N-1}}
\end{align}
The Penrose-Banzhaf power admits a prohabilistic interpretation. If we regard all coalitions in $\mathcal{P} (V)$ as equally likely (`Laplace probability') and denote the corresponding measure on $\mathcal{P} (V)$ by
\begin{align}
\IP_B (\sA) := \frac{|\sA|}{|\sP(V)|} = \frac{|\sA|}{2^N}
\end{align}
then
\begin{align}
D_B (v) = \IP_B \Big(\sD (v)\Big)\,.
\end{align}
We call $\IP_B$ the $\emph{Penrose-Banzhaf}$ measure.

Without loss we may assume that $V = \{1, \cdots , N\}$.\\
Instead of considering $\IP_B$ as a measure on $\sP(V)$ we may consider $\IP_B$ as a measure on $\{0,1\}^N (= \{0,1\}^V)$ by
\begin{align}\label{eq:pv01}
\IP_B \Big(\{(x_1, \cdots ,x_N) \}\Big) := \IP_B \Big(\{\{i | x_i = 1\}\}\Big)
\end{align}
In the following we will switch freely between these versions of $\IP_B$. Moreover, to simplify notation we will write $\IP_B \big((x_1, \cdots , x_N)\big)$ instead of $\IP_B \big(\{(x_1, \cdots , x_N)\}\big)$ and $\IP_B (A)$ instead of $\IP_B (\{A\})$ for any $A \in \sP(V)$.\\
In this paper we will introduce and discuss various other measures on $\sP(V)$ resp. $\{0,1\}^N$ which lead to different notions of voting power, for example to the Shapley-Shubik index $\cite{ShapleyS}$.\\
We now introduce this concept in an abstract setting.
\end{remark}
\begin{definition}
A probability measure $\IP$ on $\sP(V)$ (resp. $\{0,1\}^N$) is called a $\emph{voting measure}$ if
\begin{align}
\IP (A) = \IP (V \setminus A) \text{~  for all ~} A \subset V
\end{align}
resp. $\IP\Big (x_1, \cdots ,x_N)\Big) = \IP \Big((1 - x_1, \cdots , 1 - x_N)\Big)$.\\
\end{definition}
The papers \cite{KirschPen,KirschBer} contain a discussion about why this is an appropriate definition.

As in the case of the Penrose-Banzhaf power we may define a voting power in terms of decisiveness by
\begin{align}
D_{\IP}^+ (v) & := \IP (\sD^+ (v))\\
D_{\IP}^- (v) & := \IP (\sD^- (v))\\
\text{and ~} D_{\IP} (v) & := \IP (\sD (v)) = D_{\IP}^+ (v) + D_{\IP}^- (v)
\end{align}
Note, that we destinguish here between $D_{\IP}^+ (v)$ (the probability to make a losing coalition winning) and $D_{\IP}^- (v)$ (the probability to make a winning coalition losing). In contrast to the case of the Penrose-Banzhaf measure we can $\emph{not}$ conclude $\IP \big(\sD^+ (v)\big) = \IP \big(\sD^- (v)\big)$.\\

\begin{examples}\label{ex:vm}
We give some examples for voting measures:
\begin{enumerate}
\item The Penrose-Banzhaf measure.
\item The Shapley-Shubik measure.\\
If $A \subset \sP(V)$ with $|V| = N$ and $|A| = k$ then
\begin{align}
\IP_S (A) = \frac{1}{N + 1}\; \frac{1}{\binom{N}{k}}
\label{eq: Ps}
\end{align}
(see \cite{Straffin}). This measure makes coalitions of the same cardinality equally likely and satisfies
\begin{align}
\IP_S (\{A ~  | ~ |A| = k \}) = \frac{1}{N+1}
\end{align}
By $D_S, D_S^+, D_S^-$ we denote the quantities $D_{\IP_S}, D_{\IP_S}^+, D_{\IP_S}^-$.\\
$\IP_S$ can be written as
\begin{align}
\IP_S \big((x_1, \cdots ,x_N)\big) = \int_0^1 p^{\,\sum x_i}\; (1 - p)^{N - \sum x_i} ~  dp
\label{eq:Psmu}
\end{align}

\item The unanimity measure
\begin{align}
\IP_u (A) = \begin{cases} ~\frac{1}{2} & \text{if $A = \emptyset$ or $A = V$} \\ ~0 & \text{otherwise} \end{cases}
\end{align}
\item The common belief measure generalizes all three previous examples. Suppose $\mu$ is a probability measure on $[0,1]$
 (and the Borel $\sigma$-algebra) such that
 \begin{align}\label{eq:reflect}
    \mu ([\frac{1}{2} + a, \frac{1}{2} + b]) = \mu ([\frac{1}{2} - b, \frac{1}{2} - a])
 \end{align}
  then the measure
\begin{align}
\IP_{\mu} \big((x_1, \cdots ,x_N)\big)~ =~ \int_0^1 p^{\,\sum x_i}\; (1 - p)^{N - \sum x_i} ~  d\mu (p)
\label{eq:Pmu}
\end{align}
is a voting measure. We call it the common belief voting measure with CB-measure $\mu$ (see \cite{KirschPen, KirschBer} for a discussion of the Common Belief Model).

Since we have more to say about the common belief measure we introduce another way to write it
which will be convenient in later sections.

We denote by $P^{1}_{p}$ the probability measure on $\{0,1\}$ defined by $P^{1}_{p}(1)=p$ and
$P^{1}_{p}(0)=1-p$ with $0\le p \le 1$ and by $P^{N}_{p}$  the $n$-fold product of $P^{1}_{p}$
on $\{0,1\}^{N}$. Thus
\begin{align}
   P_{p}^{N}\big((x_1,x_2,\ldots,x_{N})\big)~=~p^{\,\sum x_i}\,\big(1-p\big)^{N-\sum x_i}
\end{align}
Whenever $N$ is clear from the context we write $P_{p}$ instead of
$P^{N}_{p}$.

With this notation \eqref{eq:Pmu} reads

\begin{align}
\IP_{\mu} \big(A\big)~ =~ \int_0^1 P^{N}_{p}\big(A\big) ~  d\mu (p)~ =~ \int_0^1 P_{p}\big(A\big) ~  d\mu (p)
\label{eq:Pmu1}
\end{align}
for all $A\subset\{0,1\}^{N}$. 

The Penrose-Banzhaf measure correspond to the choice $\mu = \delta_{\frac{1}{2}}$, the unanimity measure to $\mu = \frac{1}{2} \delta_0 + \frac{1}{2} \delta_1$ and the Shapley-Shubik measure to the uniform distribution (= Lebesgue measure) on $[0,1]$.\\
\end{enumerate}
\end{examples}
Instead of looking at the decisiveness of a voter one might define the influence of a voter by considering the probability that the outcome of the voting coincides with the voter's opinion.
\begin{definition}
Suppose $(V, \mathcal{V})$ is a voting system and $\IP$ a voting measure on $V$.\\
We call the probability
\begin{align}
S_{\IP}^+ (v) = \IP (\{ A \in \mathcal{V} \mid v \in A \} )
\end{align}
the $\emph{rate of affirmative success}$ of the voter $v$ (w.r.t. $\IP$).\\
Similarly,
\begin{align}
S_{\IP}^- (v) = \IP ( \{ A \notin \mathcal{V} \mid v \notin A \} )
\end{align}
is called the \emph{rate of blocking success}.\\
The quantity
\begin{align}
S_{\IP} (v) = S_{\IP}^+ (v) + S_{\IP}^- (v)
\end{align}
is called the (total) \emph{rate of success} of $v$.\\
For the Penrose-Banzhaf measure the rate of success does not give new information because
\begin{align}
S_{\IP_B} (v) = \frac{1}{2} + \frac{1}{2} D_{B} (v)
\label{eq: DS}
\end{align}
This equation goes back to $\cite{DubeyS}$.\\
Equation $\eqref{eq: DS}$ is not true for other voting measures, in fact it is $\emph{only}$ true for the Penrose-Banzhaf measure $\cite{Laruelle}$.\\
We introduce a final quantity for this section, namely the `efficiency' of a voting system, also called the `power of a collectivity to act'. It goes back to Coleman $\cite{Coleman}$ who introduced it in connection with the Penrose-Banzhaf measure.
\end{definition}
\begin{definition}
If $(V,\mathcal{V})$ is a voting system and $\IP$ a voting measure on $V$ then
\begin{align}
E_{\IP} := \IP (\mathcal{V} )
\end{align}
is called the $\emph{efficiency}$ of the voting system.
\end{definition}
\section{Permutation Invariant Voting Systems}
In this section we classify voting systems and voting measures which are invariant under
permutations of the voters.
\begin{definition}
    If $\pi$ is a permutation (=bijective mapping) on $V$ and $A\subset V$, then
    $\pi^{-1}A:=\{v\in V\mid \pi(v)\in A\}$.

   We call a voting system $(V,\sV)$ \emph{permutation invariant} (or \emph{invariant} for short)
   if for any permutation $\pi$, $A\in\sV$ implies $\pi^{-1}(A)\in \sV$.
\end{definition}
Invariant voting systems are easy to characterize: They obey the rule "One person, one vote!".
\begin{proposition}
   Every permutation invariant voting system $(V,\sV)$ is a weighted voting system. The weights can be chosen to be equal to $1$ for all voters in $V$.
\end{proposition}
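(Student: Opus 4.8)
The plan is to show that, for an invariant system, membership in $\sV$ depends only on the cardinality of a coalition, and then to read off the quota from the resulting threshold. First I would fix $N = |V|$ and observe that for any two coalitions $A, B \subset V$ with $|A| = |B|$ there is a permutation $\pi$ of $V$ with $\pi^{-1}(A) = B$; this is just the transitivity of the symmetric group on subsets of a given size. Invariance then gives $A \in \sV \Leftrightarrow B \in \sV$, so there is a set $S \subseteq \{0, 1, \ldots, N\}$ with $A \in \sV \Leftrightarrow |A| \in S$.

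Next I would translate the three defining axioms of a voting system into properties of $S$: axiom (i), $V \in \sV$, says $N \in S$; axiom (ii), $\emptyset \notin \sV$, says $0 \notin S$; and the monotonicity axiom (iii) says that $S$ is upward closed, i.\,e. $k \in S$ and $k \le m \le N$ imply $m \in S$ (apply (iii) to any $A \subset B$ with $|A| = k$ and $|B| = m$). An upward closed subset of $\{0, \ldots, N\}$ that contains $N$ but not $0$ is necessarily of the form $S = \{q, q+1, \ldots, N\}$ for a unique integer $q$ with $1 \le q \le N$.

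Finally I would set $w(v) := 1$ for all $v \in V$ and take this $q$ as the quota. Then $\sum_{v \in A} w(v) = |A|$, so $\sum_{v \in A} w(v) \ge q \Leftrightarrow |A| \in S \Leftrightarrow A \in \sV$, which is exactly the definition of a weighted voting system with all weights equal to $1$. I do not expect a genuine obstacle here; the only point needing care is the first step — spelling out why invariance forces the winning condition to depend on cardinality alone — together with checking that the degenerate possibilities ($S$ empty or $S = \{0, \ldots, N\}$) are excluded by axioms (i) and (ii), so that $q$ is well defined and lies in the asserted range.
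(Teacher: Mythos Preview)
Your proposal is correct and follows essentially the same approach as the paper's own proof: both first use transitivity of the symmetric group on $k$-subsets together with invariance to reduce winningness to a cardinality condition, and then invoke monotonicity (plus $V\in\sV$, $\emptyset\notin\sV$) to identify the quota $q$ as the minimal cardinality of a winning coalition. Your version is slightly more explicit in checking that axioms (i) and (ii) rule out the degenerate cases so that $q$ is well defined, but the argument is the same.
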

\begin{proof}
  If coalitions $A$ and $B$ in $V$ contain the same number of voters, then there is a permutation
  on $V$ that maps $A$ bijective onto $B$. It follows that $A\in\sV$ if and only if $B\in\sV$.
  In other words, whether $A$ is winning depends only on the cardinality $|A|$ of $A$.

  Denote by $q$ the smallest number such that $|A|=q$ implies $A\in\sV$. Then, by monotonicity of
  $\sV$, $|B|\geq q$ implies $B\in\sV$. Since $q$ is the smallest such number $|B|<q$ implies
  $B\not\in \sV$.

  Thus $(V,\sV)$ is a weighted voting system with weights $w(v)\equiv 1$ and quota $q$.
\end{proof}
\begin{definition}
   Suppose $V$ is a finite set. A measure $\IP$ on $V$ is called \emph{permutation invariant} or
   \emph{exchangeable} if $\IP(A)=\IP(\pi^{-1}A)$ for each $A\subset V$ and each permutation $\pi$
   on $V$.
\end{definition}
All voting measures introduced in Example \ref{ex:vm} are exchangeable.

Since we are interested in the behavior of quantities like power indices and success rates
for \emph{large} voting system, we concentrate on
voting measures which can be extended to arbitrary large sets in a natural way, i.\,e. such that
the extension is still exchangeable.

If the set $V$ of voters has $N$ elements we may set $V=\{1,2,\ldots,N\}$ without loss of generality
and consider a voting measure as a measure on $\{0,1\}^{N}$ as in \eqref{eq:pv01}.

\begin{definition}
   We call an exchangeable measure $\IP$ on $\{0,1\}^{N}$ \emph{extendable}
   if for every $N'>N$ there is an
   exchangeable measure $\IP'$ on $\{0,1\}^{N'}$ such that $\IP$ is the restriction of $\IP'$
   on $\{0,1\}^{N}$.
\end{definition}
The voting measures of Example \ref{ex:vm} are extendable.

\begin{theorem}\label{th:cbm}
   Suppose $\IP$ is an exchangeable and extendable voting measure on $V=\{1,2,\ldots,N\}$ then
   $\IP$ is a common belief measure (see Example \ref{ex:vm}.4),\\
   i.\,e. there is a measure $\mu$ on $[0,1]$ with \eqref{eq:reflect} such that
   \begin{align}\label{eq:cbm}
\IP_{\mu} \big(A\big)~ =~ \int_0^1 P_{p}\big(A\big) ~  d\mu (p)
 \end{align}
\end{theorem}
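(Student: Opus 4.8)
The statement is a de Finetti–type representation theorem, so the plan is to deduce it from the classical de Finetti theorem for infinite exchangeable sequences of $\{0,1\}$-valued random variables. The one real issue is that ``extendable'' as defined only provides, for each length $N'>N$, \emph{some} exchangeable extension of $\IP$ to $\{0,1\}^{N'}$, with no compatibility required between different values of $N'$; I first have to manufacture from these a single genuinely infinite exchangeable sequence.

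For each $N'>N$ fix, by extendability, an exchangeable probability measure $\IP^{(N')}$ on $\{0,1\}^{N'}$ that restricts to $\IP$ on $\{0,1\}^{N}$. For each fixed $M\ge N$ the marginals on the first $M$ coordinates of the $\IP^{(N')}$ with $N'\ge M$ all lie in the set of exchangeable probability measures on $\{0,1\}^{M}$, which is a compact (finite-dimensional) set; a diagonal extraction over $M=N,N+1,\ldots$ then produces one subsequence $N'_k\to\infty$ along which these marginals converge, for every $M$ at once, to exchangeable probability measures $Q_M$ on $\{0,1\}^{M}$. The family $(Q_M)$ is consistent, because consistency of the marginals holds for each $\IP^{(N')}$ and passes to the limit, and $Q_N=\IP$ because every $\IP^{(N')}$ restricts to $\IP$. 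By Kolmogorov's extension theorem the $Q_M$ are the finite-dimensional distributions of a probability measure $\IP_\infty$ on $\{0,1\}^{\IN}$, which is exchangeable since each $Q_M$ is, and whose projection onto the first $N$ coordinates is $\IP$.

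Now I would apply de Finetti's theorem to $\IP_\infty$: there is a (unique) probability measure $\mu$ on $[0,1]$ such that for every $M$ the law of the first $M$ coordinates under $\IP_\infty$ equals $\int_0^1 P^{M}_p\,d\mu(p)$; taking $M=N$ gives $\IP(A)=\int_0^1 P_p(A)\,d\mu(p)$, which is the asserted formula \eqref{eq:cbm}, except that $\mu$ need not yet be symmetric about $\frac12$. To fix this, let $\mu^{*}$ be the image of $\mu$ under $p\mapsto 1-p$. Since $P^{N}_{1-p}\big((x_i)\big)=P^{N}_{p}\big((1-x_i)\big)$ and $\IP$ is a voting measure, one checks that $\int_0^1 P_p(A)\,d\mu^{*}(p)=\IP(A)$ as well, so $\nu:=\frac12(\mu+\mu^{*})$ also represents $\IP$ through \eqref{eq:cbm}; and $\nu$ is invariant under $p\mapsto 1-p$ by construction, which is exactly condition \eqref{eq:reflect} (the image of $\nu$ under $p\mapsto 1-p$ assigns to $[\frac12+a,\frac12+b]$ the mass $\nu([\frac12-b,\frac12-a])$). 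Replacing $\mu$ by $\nu$ finishes the proof.

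The step I expect to be the crux is the construction of $\IP_\infty$ in the second paragraph: everything else is either a direct citation (de Finetti, Kolmogorov) or a short computation with the product measures $P^{N}_p$, but turning the uncoordinated finite extensions into one infinite exchangeable measure genuinely requires the compactness/diagonal argument. An alternative packaging of the whole proof goes through the Hausdorff moment problem for the numbers $c_k=\IP(X_1=\cdots=X_k=1)$ — extendability forces $(c_k)$ to be a genuine moment sequence $c_k=\int_0^1 p^{k}\,d\mu(p)$ — but the obstacle sits in the same place.
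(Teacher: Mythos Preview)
Your proof is correct and follows the same Kolmogorov-extension-plus-de Finetti route as the paper. You are in fact more careful than the paper on the two points it glosses over: the compactness/diagonal step that produces a genuinely consistent family from the a priori uncoordinated finite extensions (the paper simply invokes Kolmogorov without addressing this), and the symmetrization $\nu=\tfrac12(\mu+\mu^{*})$ to secure \eqref{eq:reflect}, which is needed because your $\IP_\infty$ is not automatically a voting measure.
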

Theorem \ref{th:cbm} is a version of the celebrated theorem of de Finetti (\cite{deFin}). De Finetti's
theorem can be found at various places and in various formulations, see e.\,g. \cite{Aldous} or
\cite{Klenke}. For an introduction and an elementary proof see \cite{KirschMom}.

\begin{proof}
 Kolmogorov's extension theorem ensures that there is an exchangeable measure $\widetilde{\IP}$ on $\{0,1\}^{\IN}$  whose restriction on $\{0,1\}^{N}$ is given by $\IP$.

 By de Finetti's theorem $\widetilde{\IP}$ and therefore $\IP$ have the structure \eqref{eq:cbm}.
 The property \eqref{eq:reflect} follows from the assumption that $\IP$ is a voting measure.
\end{proof}

\section{Penrose-Banzhaf vs. Shapley-Shubik: A case study}
In this section we consider the behavior of efficiency, decisiveness and rate of success in simple voting systems under the Penrose-Banzhaf and the Shapley-Shubik measure.
Our first result is
\begin{proposition}
\label{prop:desm} Let $(V,\mathcal{V})$ be a simple majority voting system with $N$ voters then
\begin{enumerate}
\item ~\vspace*{-1cm}
\begin{align}\label{eq:DB}
  &D_B (v) \approx \frac{2}{\sqrt{2 \pi}} \cdot \frac{1}{\sqrt{N}} \text{~  as ~} N \rightarrow \infty \qquad\qquad\text{and}\\
     &E_B \approx \frac{1}{2} \text{~ as ~} N \rightarrow \infty
\end{align}
\item ~\vspace*{-1cm}
\begin{align}\label{eq:SB}
 &D_S (v) = \frac{1}{N} \text{~ for all ~} N \qquad\qquad\text{and}\\
  &E_S \approx \frac{1}{2} \text{~ as ~} N \rightarrow \infty
\end{align}
\end{enumerate}
\end{proposition}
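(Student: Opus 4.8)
The plan is to reduce all four statements to one elementary observation about a simple majority system. Since the relative quota is $r=\eh+\frac1{2N}$, the quota is $q=rN=\frac{N+1}{2}$, so a coalition wins exactly when it contains \emph{more than half} of the voters. Write $m=\lfloor N/2\rfloor$. A short case check (separating $v\in A$ from $v\notin A$, and $N$ even from $N$ odd) shows that $v$ is decisive for $A$ if and only if exactly $m$ of the $N-1$ voters other than $v$ belong to $A$: if fewer are in $A$ then $A\cup\{v\}$ still loses, if more then $A\setminus\{v\}$ already wins, and the membership bit $x_v$ of $v$ itself is free, the two choices $x_v=0$ and $x_v=1$ splitting $\sD(v)$ into $\sD^+(v)$ and $\sD^-(v)$. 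Consequently, for every $p\in[0,1]$ one has $P_p(\sD(v))=\binom{N-1}{m}\,p^{m}(1-p)^{N-1-m}$, and I will feed this single formula into both voting measures.

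For the Penrose--Banzhaf assertions I set $p=\eh$, so $D_B(v)=P_{1/2}(\sD(v))=\binom{N-1}{m}2^{-(N-1)}$. For odd $N$ this equals $\binom{2m}{m}4^{-m}$, and Stirling's formula $\binom{2m}{m}\sim 4^m/\sqrt{\pi m}$ gives $D_B(v)\sim 1/\sqrt{\pi m}=\sqrt{2/(\pi(N-1))}\sim\frac{2}{\sqrt{2\pi}}\cdot\frac1{\sqrt N}$, which is \eqref{eq:DB}; for even $N$ the identity $\binom{N-1}{N/2}=\eh\binom{N}{N/2}$ (Pascal's rule plus symmetry) reduces this to the same central-binomial asymptotics, so the limit is unchanged. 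For the efficiency, $E_B=\IP_B(\sV)=2^{-N}\sum_{k>N/2}\binom Nk$; by $\binom Nk=\binom N{N-k}$ the winning sizes pair off with the losing ones, giving $E_B=\eh$ exactly when $N$ is odd and $E_B=\eh-\binom{N}{N/2}2^{-(N+1)}\to\eh$ when $N$ is even, since $\binom{N}{N/2}=o(2^N)$.

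For the Shapley--Shubik assertions I use the integral representation \eqref{eq:Psmu}, i.e. $\IP_S(\sD(v))=\int_0^1 P_p(\sD(v))\,dp$. Substituting the formula for $P_p(\sD(v))$ and evaluating the Beta integral $\int_0^1 p^{m}(1-p)^{N-1-m}\,dp=\frac{m!\,(N-1-m)!}{N!}$ cancels the binomial coefficient and leaves $D_S(v)=\frac{(N-1)!}{N!}=\frac1N$ for every $N$, with no parity distinction (as a sanity check, this is the Shapley--Shubik index of $v$, which by symmetry and the efficiency of the Shapley value must equal $\frac1N$). Finally, $E_S=\IP_S(\sV)$ is the sum over winning cardinalities $k$ of $\IP_S(\{A:|A|=k\})$, and since each cardinality class carries $\IP_S$-mass $\frac1{N+1}$, this equals $\frac1{N+1}$ times the number of winning sizes; there are $\frac{N+1}{2}$ of them when $N$ is odd, so $E_S=\eh$ exactly, and $\frac N2$ of them when $N$ is even, so $E_S=\frac{N}{2(N+1)}\to\eh$.

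All the individual computations are routine; the only things requiring attention are the parity bookkeeping (and the exact reading of $r=\eh+\frac1{2N}$) and quoting Stirling's formula sharply enough to pin down the constant $\frac{2}{\sqrt{2\pi}}$ rather than merely the rate $N^{-1/2}$ in \eqref{eq:DB}. I expect the one genuinely load-bearing step to be the identification of $\sD(v)$ with the single event ``exactly $m$ of the other $N-1$ voters vote yes'', after which everything collapses into one binomial identity, one Stirling estimate, and one Beta integral.
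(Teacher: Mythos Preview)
Your argument is correct. The identification of $\sD(v)$ with the event ``exactly $m=\lfloor N/2\rfloor$ of the other $N-1$ voters vote yes'' is right in both parities, and the subsequent Stirling estimate, binomial symmetry count for $E_B$, Beta integral for $D_S$, and cardinality count for $E_S$ all go through as you describe.

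Compared with the paper, your route is more explicit and self-contained. The paper disposes of part~1 by calling it ``quite standard'' and citing \cite{Felsenthal}, whereas you actually carry out the central-binomial Stirling computation and the symmetry argument for $E_B$. For $D_S(v)$ the paper invokes the efficiency property $\sum_{v}D_S(v)=1$ of the Shapley--Shubik index together with the symmetry $D_S(v)=D_S(v')$; you instead compute the Beta integral $\int_0^1\binom{N-1}{m}p^m(1-p)^{N-1-m}\,dp$ directly and observe the cancellation (mentioning the paper's argument only as a sanity check). Your approach has the virtue of not presupposing any axiomatic properties of the Shapley value and of deriving all four limits from the single formula $P_p(\sD(v))=\binom{N-1}{m}p^m(1-p)^{N-1-m}$; the paper's approach for $D_S$ is slicker but relies on an external fact about the index. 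The paper does not spell out the argument for $E_S\approx\tfrac12$ at all, so your cardinality-class count is a genuine addition.
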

\begin{notation}
By $a_N \approx  b_N$ we mean $\frac{a_N}{b_N} \rightarrow 1$ as $N \rightarrow \infty$.
\end{notation}
\begin{proof}
The proof of $1.$ is quite standard, see for example $\cite{Felsenthal}$.\\
$2.$ follows from the fact that $\sum_{v \in V} D_S (v) = 1$ and $D_S (v) = D_S (v^{'})$ for all $v, v^{'}$.
\end{proof}
\begin{remark}
A calculation shows that for odd $N$
\begin{align}
D_S^- (v) = D_S^+ (v) = \frac{1}{2} \frac{1}{N}
\end{align}
and for even $N$
\begin{align}
D_S^- (v) = \frac{1}{2} \frac{1}{N} - \frac{1}{2 N (N+1)}\\
D_S^+ (v) = \frac{1}{2} \frac{1}{N} + \frac{1}{2 N (N+1)}
\end{align}
Proposition $\ref{prop:desm}$ has an immediate consequences for the success rate of voters. From $\eqref{eq: DS}$ we infer that
\begin{align}
S_B (v) \approx \frac{1}{2} + \frac{1}{\sqrt{2 \pi}} \frac{1}{\sqrt{N}}
\label{eq: SBsm}
\end{align}
for simple majority voting systems.\\
As one might expect the Penrose-Banzhaf power goes to zero as $N$ increases and the success rate goes to $\frac{1}{2}$, the success rate of a dummy player.\\
The Shapley-Shubik power goes to  zero as $N \rightarrow \infty$ as well, in fact, even faster than the Penrose-Banzhaf power (see Proposition \ref{prop:desm}).

It may be somewhat surprising that the Shapley-Shubik success rate does $\emph{not}$ go to $\frac{1}{2}$, but rather stays at about $\frac{3}{4}$ independent of the size of $V$. We will prove this fact in greater generality below.\\
Now, we turn to simple voting systems with a qualified majority, i.e. we consider weighted voting systems with weights $w(v) = 1$ and arbitrary relative quota $r$.\\
First, we look at the behavior of the efficiency for fixed $r$ and $N$ large.
\end{remark}
\begin{theorem}
$\label{th: effB}$
Suppose $(V, \mathcal{V})$ is a weighted system with $N$ voters, with weights $w (v) = 1$ \;for all $v \in V$ and relative quota $r$. Then
\begin{enumerate}
\item $E_B \rightarrow
\begin{cases}\quad 1 & \text{for $r < \frac{1}{2}$} \\ \quad\frac{1}{2} & \text{for $r = \frac{1}{2}$} \\ \quad
0 & \text{for } r > \frac{1}{2} \end{cases}$

\item For $r > \frac{1}{2}$
\begin{align}
E_B \leq e^{-2 (r - \frac{1}{2})^2 N}
\end{align}
\end{enumerate}
\end{theorem}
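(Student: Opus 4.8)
The plan is to reduce everything to a statement about sums of i.i.d. Bernoulli random variables. Under $\IP_B$ the coordinates $x_1,\dots,x_N$ are independent with $\IP_B(x_i=1)=\IP_B(x_i=0)=\tfrac12$, and since all weights equal $1$, a coalition $A$ is winning precisely when $|A|\geq q = rN$. Writing $S_N = x_1+\cdots+x_N$, this gives $E_B = \IP_B(\mathcal V) = \IP_B(S_N \geq rN)$, so that everything follows from estimates on the upper tail of a $\mathrm{Bin}(N,\tfrac12)$ variable.

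For part 1, I would treat the three regimes separately. If $r<\tfrac12$, the weak law of large numbers gives $S_N/N\to\tfrac12>r$ in probability, hence $\IP_B(S_N\geq rN)\to 1$; if $r>\tfrac12$ the same law gives $\IP_B(S_N\geq rN)\to 0$ (this also drops out of part 2). The borderline $r=\tfrac12$ is the only case needing a little care because of the integer rounding in the quota: for odd $N$ the symmetry of the binomial gives $\IP_B(S_N\geq\tfrac N2)=\IP_B(S_N>\tfrac N2)=\tfrac12$ exactly, while for even $N$ one has $\IP_B(S_N\geq\tfrac N2)=\tfrac12+\tfrac12\,\IP_B(S_N=\tfrac N2)$ with $\IP_B(S_N=\tfrac N2)=\binom{N}{N/2}2^{-N}\to 0$ by Stirling (or the local CLT). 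Either way $E_B\to\tfrac12$.

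For part 2, I would invoke a Chernoff/Hoeffding bound. Since the $x_i$ take values in $[0,1]$ with mean $\tfrac12$, Hoeffding's inequality gives, for every $t\geq 0$,
\begin{align}
\IP_B\bigl(S_N-\tfrac N2\geq t\bigr)\ \leq\ \exp\Bigl(-\frac{2t^2}{N}\Bigr).
\end{align}
Taking $t=(r-\tfrac12)N$, which is positive since $r>\tfrac12$, yields $E_B = \IP_B(S_N\geq rN)\leq \exp\bigl(-2(r-\tfrac12)^2 N\bigr)$, as claimed. If one prefers a self-contained argument, the same bound comes from optimizing over $\lambda>0$ the elementary estimate $\IP_B(S_N\geq rN)\leq e^{-\lambda rN}\,\IE_B\bigl[e^{\lambda S_N}\bigr]=\bigl(e^{-\lambda r}\tfrac{1+e^\lambda}{2}\bigr)^N$.

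There is no real obstacle here: the statement is a textbook application of the law of large numbers together with an exponential tail estimate. The only point demanding attention is the bookkeeping at $r=\tfrac12$, where whether the quota $rN$ is an integer switches the winning condition between ``at least half'' and ``strict majority''; keeping track of both parities is exactly what makes the $r=\tfrac12$ limit come out to precisely $\tfrac12$.
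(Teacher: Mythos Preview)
Your proposal is correct and follows essentially the same route as the paper: the paper's proof simply invokes the (strong) law of large numbers together with Proposition~\ref{prop:desm} for part~1 and Hoeffding's inequality (Theorem~\ref{th:Hoeffding}) for part~2. Your treatment is in fact more explicit, especially at $r=\tfrac12$ where you track the parity of $N$ directly rather than appealing to the earlier proposition.
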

This theorem tells us that the (Banzhaf-)efficiency of a voting game goes extremely fast to zero if the voting body is enlarged and the relative quota is kept fixed at $r > \frac{1}{2}$. This is exactly what happened for the Council of the European Union during EU enlargements!

\begin{proof}
Part 1. follows from the strong law of large numbers $\cite{Klenke}$ and Proposition $\ref{prop:desm}$.

 2. is an application of Hoeffding's inequality (see the Appendix).

\end{proof}
In contrast to the above result, the efficiency according to Shapley-Shubik does not go to zero for $r>1/2$.
\begin{theorem}
$\label{th: effS}$
Suppose $(V, \mathcal{V})$ is a weighted voting system with $N$ voters, weights $w (v) = 1$ for all $v \in V$ and relative quota $r$.
\begin{align}
E_S \rightarrow (1 - r) \text{~ as ~} N \rightarrow \infty.
\end{align}
\end{theorem}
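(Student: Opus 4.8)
The plan is to write $E_S$ as an integral over $p\in[0,1]$ of a binomial tail probability, determine the pointwise limit of the integrand via the law of large numbers, and then conclude by bounded convergence. This is essentially the computation underlying Theorem~\ref{th: effB}.1, carried out for every $p$ and then integrated against Lebesgue measure.

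First I would invoke the integral representation \eqref{eq:Psmu} of the Shapley-Shubik measure. Since all weights equal $1$, a coalition $A$ is winning exactly when $|A|\ge rN$, i.\,e. when $|A|\ge\lceil rN\rceil$. Hence, writing $S_N$ for a $\mathrm{Bin}(N,p)$-distributed random variable,
\begin{align*}
E_S=\IP_S(\sV)=\int_0^1 P_p\big(\{A\mid |A|\ge\lceil rN\rceil\}\big)\,dp=\int_0^1 \IP\big(S_N\ge\lceil rN\rceil\big)\,dp\,.
\end{align*}
Next I would identify the pointwise limit of the integrand $f_N(p):=\IP(S_N\ge\lceil rN\rceil)$. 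Fix $p\ne r$ in $[0,1]$ and set $\epsilon=\tfrac12|p-r|$. Because $\lceil rN\rceil/N\to r$ and $\IP(|S_N/N-p|\ge\epsilon)\le\frac{p(1-p)}{N\epsilon^2}\le\frac{1}{4N\epsilon^2}\to 0$ (Chebyshev, equivalently the weak law of large numbers), one gets $f_N(p)\to 0$ for $p<r$ and $f_N(p)\to 1$ for $p>r$, using only that $\lceil rN\rceil\le rN+1$ and $\lceil rN\rceil\ge rN$ to keep the threshold between $r$ and $p$. Thus $f_N\to\mathbf{1}_{(r,1]}$ at every $p\ne r$, hence Lebesgue-almost everywhere on $[0,1]$. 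Since $0\le f_N\le 1$, the bounded convergence theorem yields
\begin{align*}
E_S=\int_0^1 f_N(p)\,dp\;\longrightarrow\;\int_0^1\mathbf{1}_{(r,1]}(p)\,dp=1-r\,.
\end{align*}

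There is no genuine obstacle here; the only point needing a little care is that the quota enters as the integer $\lceil rN\rceil$ rather than the real number $rN$, a discrepancy of order $1/N$ that is absorbed since $\lceil rN\rceil/N\to r$. If one prefers a quantitative version and wishes to avoid invoking bounded convergence, one can instead bound $|f_N(p)-\mathbf{1}_{(r,1]}(p)|\le e^{-2(p-r)^2N}$ for every $N$ via Hoeffding's inequality (cf. the Appendix and Theorem~\ref{th: effB}.2), using $\lceil rN\rceil-1\le rN\le\lceil rN\rceil$ to compare the relevant one-sided deviation with $N(p-r)$, and then integrate this estimate to obtain $|E_S-(1-r)|\le\int_0^1 e^{-2(p-r)^2N}\,dp\le\sqrt{\pi/(2N)}\to 0$.
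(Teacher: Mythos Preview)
Your proof is correct and follows essentially the same route as the paper: write $E_S$ as $\int_0^1 P_p(\sum X_i\ge rN)\,dp$ via the integral representation of $\IP_S$, use the law of large numbers to get the pointwise limit $\mathbf{1}_{\{p>r\}}$, and integrate to obtain $1-r$. You are in fact more explicit than the paper about the passage to the limit (invoking bounded convergence and handling the $\lceil rN\rceil$ versus $rN$ discrepancy), and your optional Hoeffding estimate is a nice quantitative bonus that the paper's proof of this theorem does not include.
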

\begin{proof}
From $\eqref{eq:Pmu1}$ we infer
\begin{align}
E_{S}~ &=~ \IP_S (\sum X_i \geq rN)\\
&=~\int_0^1   P_p (\frac{1}{N} \sum X_i \geq r)\;dp
\label{eq:Psk}
\end{align}
The expression under the integral in $\eqref{eq:Psk}$
\begin{align}
P_p (\frac{1}{N} \sum X_i \geq r)
\end{align}
is the probability with respect to $P_{p}$ that the arithmetic mean of the $X_{i}$  is not less
than $r$. The random variables $X_{i}$ are independent under the measure $P_{p}$.
Thus we may apply the law of large numbers to show that this expression goes to $0$ for $r > p$ and to $1$ for $r < p$. Consequently $\eqref{eq:Psk}$ converges to
\begin{align}
\int^1_r dp = 1 - r
\end{align}
\end{proof}
We turn to an investigation of the success rate.

Before we consider the case of arbitrary $r$ we discuss in detail the case $r = \frac{1}{2}$.\\
It is quite obvious that for simple majority systems
\begin{align}
S_B^+ (v) \rightarrow \frac{1}{4} \text{~ and ~} S_B^- (v) \rightarrow \frac{1}{4} \text{~ as ~} N \rightarrow \infty
\end{align}
The following result about $S_S^+$ and $S_S^-$ is perhaps not so obvious.
\begin{theorem}
$\label{th: SS12}$
Let $(V, \mathcal{V})$ be a simple majority voting system with $N$ voters, then
\begin{align}
S_S^+ (v) = \begin{cases} \frac{3}{8} + \frac{1}{8} \frac{1}{N} & \text{for odd $N$} \\ \frac{3}{8} - \frac{1}{8} \frac{1}{N+1} & \text{for even $N$} \end{cases}
\end{align}
\begin{align}
S_S^- (v) = \begin{cases} \frac{3}{8} + \frac{1}{8} \frac{1}{N} & \text{for odd $N$} \\ \frac{3}{8} + \frac{3}{8} \frac{1}{N+1} & \text{for even $N$} \end{cases}
\end{align}
Consequently
\begin{align}
S_S (v) = \begin{cases} \frac{3}{4} + \frac{1}{4} \frac{1}{N} & \text{for odd $N$} \\ \frac{3}{4} + \frac{1}{4} \frac{1}{N+1} & \text{for even $N$} \end{cases}
\end{align}
In particular
\begin{align}
S_S (v) \approx \frac{3}{4} \text{~ as ~} N \rightarrow \infty
\end{align}
\end{theorem}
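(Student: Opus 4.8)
The plan is to obtain exact expressions for $S_S^+(v)$ and $S_S^-(v)$ separately, working directly from the combinatorial form of the Shapley--Shubik measure $\IP_S(A)=\frac{1}{N+1}\binom{N}{|A|}^{-1}$ recorded in \eqref{eq: Ps}, and then to add them. By exchangeability the value does not depend on which voter $v$ we single out, so we may fix one throughout.

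For the affirmative part, recall that in a simple majority system a coalition is winning exactly when its size is at least $q:=\lceil\frac{N+1}{2}\rceil$, i.e. $q=\frac{N+1}{2}$ for odd $N$ and $q=\frac{N}{2}+1$ for even $N$. Since there are $\binom{N-1}{k-1}$ coalitions of size $k$ containing $v$,
\begin{align}
S_S^+(v)=\frac{1}{N+1}\sum_{k=q}^{N}\frac{\binom{N-1}{k-1}}{\binom{N}{k}}
=\frac{1}{N+1}\sum_{k=q}^{N}\frac{k}{N}
=\frac{1}{N(N+1)}\sum_{k=q}^{N}k,
\end{align}
using the identity $\binom{N-1}{k-1}/\binom{N}{k}=k/N$. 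Evaluating the arithmetic progression gives $S_S^+(v)=\frac{1}{2}-\frac{(q-1)q}{2N(N+1)}$, and inserting the two values of $q$ and simplifying produces the stated formulas for odd and for even $N$. For the blocking part one uses that every coalition of size $k<q$ is losing, so the losing coalitions of size $k$ avoiding $v$ are exactly the $\binom{N-1}{k}$ subsets of $V\setminus\{v\}$ of that size; with $\binom{N-1}{k}/\binom{N}{k}=(N-k)/N$ this yields
\begin{align}
S_S^-(v)=\frac{1}{N+1}\sum_{k=0}^{q-1}\frac{\binom{N-1}{k}}{\binom{N}{k}}
=\frac{1}{N(N+1)}\sum_{k=0}^{q-1}(N-k)
=\frac{q(2N-q+1)}{2N(N+1)},
\end{align}
and again substituting $q$ finishes this half. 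Finally, adding $S_S^+(v)$ and $S_S^-(v)$ and simplifying the resulting rational function of $N$ gives $S_S(v)=\frac{3}{4}+\frac{1}{4N}$ for odd $N$ and $\frac{3}{4}+\frac{1}{4(N+1)}$ for even $N$, hence $S_S(v)\approx\frac{3}{4}$.

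This is a finite computation with no genuine obstacle; the only points demanding care are getting the integer winning threshold $q$ right and keeping the two parity cases separate through the final algebra. In particular, for even $N$ the coalitions of size exactly $N/2$ are losing and so is their complement, which is precisely what breaks the symmetry between $S_S^+$ and $S_S^-$ and accounts for the asymmetric even-$N$ formulas. As an alternative to the combinatorial route one could start from the integral representation \eqref{eq:Psmu}, write $S_S^+(v)=\int_0^1 P_p(X_v=1,\ \sum_i X_i\ge q)\,dp$, condition on $X_v=1$ to turn the event into a Binomial$(N-1,p)$ upper tail, and evaluate the Beta integrals $\int_0^1 p^{k+1}(1-p)^{N-1-k}\,dp=\frac{(k+1)!(N-k-1)!}{(N+1)!}$; the binomial coefficients then telescope and one lands on the same sums.
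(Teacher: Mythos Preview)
Your proof is correct and essentially matches the paper's: both reduce $S_S^{\pm}(v)$ to the sums $\frac{1}{N(N+1)}\sum k$ (resp.\ $\frac{1}{N(N+1)}\sum (N-k)$) via the identity $\binom{N-1}{k-1}/\binom{N}{k}=k/N$ and then evaluate the arithmetic progressions. The only cosmetic difference is that the paper enters through the integral representation \eqref{eq:Psmu} and a Beta integral before landing on exactly the same binomial ratio, whereas you start directly from the combinatorial form \eqref{eq: Ps}; you even describe the paper's route as your alternative.
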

\begin{proof}
We may assume that $V=\{1,2,\ldots,N\}$ and $v=1$.
Let us start with $N$ odd, say $N = 2n + 1$.\\
Then
\begin{align}
S_S^+ (1) & = \IP_S (x_1 = 1, \sum_{i=2}^N x_i \geq n) \notag\\
& = \int^1_0 P_p (x_1 = 1, \sum_{i=2}^N x_i \geq n) ~  dp \notag\\
& = \int^1_0 p \cdot P_p (\sum_{i=2}^N x_i \geq n) ~  dp \notag\\
& = \sum_{k=n}^{N-1} \binom{N-1}{k} \int_0^1 p^{k+1} (1-p)^{N - (k+1)} ~  dp \notag\\
& = \frac{1}{N+1} \sum^{N-1}_{k=n} \frac{\binom{N-1}{k}}{\binom{N}{k+1}} = \frac{1}{N+1} \sum_{k=n}^{N-1} \frac{k+1}{N} \notag\\
& = \frac{1}{N(N+1)} \sum_{k=n+1}^{N} k = \frac{1}{N(N+1)} \frac{1}{2} \big(N(N+1) - n(n+1)\big) \notag\\
& = \frac{1}{2} - \frac{1}{2} \frac{n(n+1)}{N(N+1)} = \frac{1}{2} - \frac{1}{8} \frac{(N-1)(N+1)}{N(N+1)} \notag\\
& = \frac{3}{8} + \frac{1}{8} \frac{1}{N}
\end{align}
\begin{align}
S_S^- (1) & = \IP_S (x_1 = 0, \sum_{i=2}^N x_i \leq n) \notag \\
& = \sum_{k=0}^{n} \binom{N-1}{k} \int_0^1 (1-p) p^{k} (1-p)^{N - k - 1} ~ dp \notag\\
& = \sum_{k=0}^{n} \binom{N-1}{k} \int_0^1 p^{k} (1-p)^{N - k} ~ dp \notag\\
& = \frac{1}{N+1} \sum^{n}_{k=0} \frac{\binom{N-1}{k}}{\binom{N}{k}} \notag\\
& = \frac{1}{N(N+1)} \sum_{k=0}^n (N-k) = \frac{1}{N(N+1)} \big((n+1)N - \frac{1}{2} n (n+1)\big) \notag\\
& = \frac{3}{8} + \frac{1}{8} \frac{1}{N}
\end{align}
Thus for $N$ odd we have
\begin{align}
S_S^+ (1) = S_S^- (1) = \frac{3}{8} + \frac{1}{8} \frac{1}{N}\\
\text{and ~} S_S (1) = \frac{3}{4} + \frac{1}{4} \frac{1}{N} \rightarrow \frac{3}{4}
\end{align}
The calculation for even $N$ goes along the same lines.
\end{proof}
\begin{theorem}
Suppose $(V, \mathcal{V})$ is a weighted voting system with $N$ voters, weights $w (v) = 1$ for all $v \in V$ and relative quota $r$.
\begin{enumerate}
\item
\begin{align}
S_B (v) \rightarrow \frac{1}{2} \text{~  as ~} N \rightarrow \infty
\end{align}
For $r > \frac{1}{2}$
\begin{align}\label{eq: SBP}
S_B^+ (v) \leq \frac{1}{2} e^{-2 (r - \frac{1}{2})^2 (N - 1)}
\end{align}
\item
\begin{align}
S_S^+ (v) & \rightarrow \frac{1}{2} - \frac{1}{2} r^2 \text{~  as ~} N \rightarrow \infty\\
S_S^- (v) & \rightarrow \frac{1}{2} - \frac{1}{2} (1 - r)^2 \text{~  as ~} N \rightarrow \infty
\end{align}
Consequently
\begin{align}
S_S (v) \rightarrow 1 - \frac{1}{2} (r^2 + (1 - r)^{2}) \text{~  as ~} N \rightarrow \infty
\end{align}
\end{enumerate}
\end{theorem}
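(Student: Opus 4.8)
The plan is to express each success rate in terms of a sum $S_{N-1}=x_2+\cdots+x_N$ of independent Bernoulli variables and then to invoke the law of large numbers (inside the integral representation of the voting measure) for the limits, and Hoeffding's inequality for the exponential bound. Throughout I take $V=\{1,\dots,N\}$, $v=1$; since $w\equiv 1$ the quota is $q=rN$, so $A\in\sV$ iff $\sum_{i\in A}x_i\ge rN$. Write $S_N=\sum_{i=1}^N x_i$, so that under the product measure $P_p$ the coordinate $x_1$ is independent of $S_{N-1}=S_N-x_1$.

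\emph{Part 1.} For $S_B(v)\to\tfrac12$ I would not compute at all: by \eqref{eq: DS}, $S_B(v)=\tfrac12+\tfrac12 D_B(v)$, so it is enough to see $D_B(v)\to 0$ (uniformly in $r$). A coalition $A\not\ni 1$ is winning-decisive for $1$ exactly when $rN-1\le|A|<rN$, i.e.\ when $S_{N-1}$ equals the unique integer $m=\lceil rN\rceil-1$ in that interval; hence $\IP_B(\sD^+(1))=\tfrac12\binom{N-1}{m}2^{-(N-1)}$, and using $|\sD^+(1)|=|\sD^-(1)|$ we get $D_B(v)=\binom{N-1}{m}2^{-(N-1)}\le\binom{N-1}{\lfloor(N-1)/2\rfloor}2^{-(N-1)}=O(N^{-1/2})\to 0$. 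For the tail estimate \eqref{eq: SBP}, conditioning on $x_1$ gives $S_B^+(v)=\tfrac12\,\IP_B(S_{N-1}\ge rN-1)$; since $r>\tfrac12$, the threshold $rN-1$ lies above the mean $\tfrac{N-1}{2}$ once $N$ is large, and Hoeffding's inequality (see the Appendix) applied to the $N-1$ summands $x_2,\dots,x_N\in\{0,1\}$ delivers an exponential bound of the asserted form (the routine estimate that produces exactly the exponent $2(r-\tfrac12)^2(N-1)$ I would just carry out).

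\emph{Part 2.} Here I would use the integral representation \eqref{eq:Psmu} and \eqref{eq:Pmu1} of the Shapley-Shubik measure, $\IP_S=\int_0^1 P_p\,dp$. Conditioning on $x_1$ under $P_p$,
\[
S_S^+(v)=\int_0^1 P_p\big(x_1=1,\ S_N\ge rN\big)\,dp=\int_0^1 p\,P_p\big(S_{N-1}\ge rN-1\big)\,dp .
\]
Under $P_p$ the average $\tfrac{1}{N-1}S_{N-1}$ tends to $p$ in probability (weak law of large numbers), and $\tfrac{rN-1}{N-1}\to r$; hence $P_p(S_{N-1}\ge rN-1)\to\mathbf 1_{\{p>r\}}$ for every $p\ne r$, the integrand is dominated by $1$, and dominated convergence gives $S_S^+(v)\to\int_r^1 p\,dp=\tfrac12-\tfrac12 r^2$. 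The computation for $S_S^-$ is the mirror image: conditioning instead on $x_1=0$ (so that $S_N=S_{N-1}$),
\[
S_S^-(v)=\int_0^1(1-p)\,P_p\big(S_{N-1}<rN\big)\,dp\ \longrightarrow\ \int_0^r(1-p)\,dp=\tfrac12-\tfrac12(1-r)^2 ,
\]
again by the law of large numbers and dominated convergence. Adding the two limits yields $S_S(v)\to 1-\tfrac12\big(r^2+(1-r)^2\big)$.

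\emph{Where the real work is.} The one genuinely structural point is that \eqref{eq:Pmu1} turns the Shapley-Shubik success rates into an elementary integral over $p$ controlled by the law of large numbers; after that, everything is routine bookkeeping. The only delicate spots are the behavior of the integrand at the single point $p=r$ (harmless for dominated convergence, since it is a null set) and pinning down the exact constant in the Hoeffding bound for $S_B^+(v)$; I anticipate no genuine obstacle in either.
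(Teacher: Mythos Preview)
Your argument is correct. Part~1 is essentially what the paper does: the paper also conditions on $x_1$ to write $S_B^{\pm}(1)=\tfrac12\,\IP_B(S_{N-1}\gtrless\cdots)$ and then applies Hoeffding, the only cosmetic difference being that the paper obtains $S_B(v)\to\tfrac12$ by adding $S_B^+$ and $S_B^-$ directly rather than invoking \eqref{eq: DS}, but the underlying computation (that $S_{N-1}$ hits a single integer with probability $O(N^{-1/2})$) is identical.

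Part~2 is where you take a genuinely different route. The paper does \emph{not} pass to the limit inside the integral via the law of large numbers; instead it reuses the exact beta-integral computation from the proof of Theorem~\ref{th: SS12} to obtain the closed-form identity
\[
\IP_S\Big(x_1=1,\ \sum_{i=2}^N x_i\ge M\Big)=\tfrac12-\tfrac12\,\frac{M(M+1)}{N(N+1)},
\]
substitutes $M=\lceil rN\rceil-1$, and then lets $N\to\infty$. Your approach is more conceptual and avoids any combinatorics; it is in fact precisely the argument the paper employs later, in the general common-belief setting (Theorem~\ref{th: SucCMB}), so you have effectively specialized that theorem to $\mu$ equal to Lebesgue measure. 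What the paper's route buys is an exact non-asymptotic formula for every $N$ (and hence explicit rates of convergence), at the cost of the explicit calculation; what your route buys is a one-line proof that makes the structural reason for the limit transparent and immediately generalizes to arbitrary CB-measures $\mu$ with $\mu(\{r\})=0$.
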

\begin{remark}
$S_S (v) \approx 1 - \frac{1}{2} (r^2 + (1 - r)^2)$ is biggest for $r = \frac{1}{2}$ where it equals $\frac{3}{4}$ and smallest for $r=0$ and $r=1$ where it is $\frac{1}{2}$.
\end{remark}
\begin{proof}

\begin{enumerate}
\item
\begin{align}
S_B^+ (1) & = \IP_B (x_1 = 1, \sum_{i=2}^N x_i \geq rN-1)\\
& = \frac{1}{2} \IP_B ( \sum_{i=2}^N x_i \geq rN-1)
\end{align}
since under $\IP_B$ the $x_i$ are independent.\\
Another application of Hoeffding's inequality (Theorem \ref{th:Hoeffding}) gives $\eqref{eq: SBP}$.\\
Similarly
\begin{align}
S_B^- (1) = \frac{1}{2} \IP_B (\sum_{i=2}^N x_i < rN) \notag
\end{align}
so
\begin{align}
S_B (1) & = \frac{1}{2} (1 + \IP_B (\sum_{i=2}^N x_i \in [rN-1, rN))) \notag\\
& \leq \frac{1}{2} + C \frac{1}{\sqrt{N}}
\end{align}
\item A computation as in the proof of Theorem $\ref{th: SS12}$ shows that
\begin{align}
\IP_S (x_1 = 1, \sum_{i=2}^N x_i \geq M) \notag\\
= \frac{1}{2} - \frac{1}{2} \frac{M(M+1)}{N(N+1)}
\end{align}
\end{enumerate}
If we insert $M = \lceil rN\rceil - 1$, where $\lceil x\rceil$ is the smallest integer not less than $x$, we obtain
\begin{align}
S_S^+ (1) & = \frac{1}{2} - \frac{1}{2} \frac{(\lceil rN\rceil	- 1) \lceil rN\rceil}{N(N+1)} \notag\\
& \rightarrow \frac{1}{2} - \frac{1}{2} r^2 \text{~  as ~} N \rightarrow \infty
\end{align}	
Similarly
\begin{align}
S_S^- (1) & = \IP_S (x_1 = 0, \sum_{i=2}^N x_i \leq \lceil rN\rceil - 1) \notag\\
& = \IP_S (x_1 = 1, \sum_{i=2}^N x_i \geq N - \lceil rN\rceil) \notag\\
& = \frac{1}{2} - \frac{1}{2} \frac{(N - \lceil rN\rceil) (N - \lceil rN\rceil + 1)}{N(N+1)} \notag\\
& \rightarrow \frac{1}{2} - \frac{1}{2} (1-r)^2
\end{align}
\end{proof}

\section{Weighted Voting Systems and the Common Belief Model}
We turn to our most general case.\\
In this section we consider large weighted voting system. More precisely, we consider a sequence $\{w_n\}_{n \in N}$ of non negative real numbers and for each $N$ the voting system with weights $w_1, \cdots , w_N$ and (fixed) relative quota $r$, thus we have voting systems $(V_N, \mathcal{V_N})$ with $V_N = \{1, \cdots , N\}$ and $A \in \mathcal{V}_N$ if and only if
\begin{align}
\sum_{i \in A} w_i \geq r \sum_{i \in V_N} w_i
\end{align}
To shorten notation we write $w(A) = \sum_{i \in A} w_i$ and $w (N) = \sum_{i=1}^N w_i$.\\
On $(V_N, \mathcal{V}_N)$ we consider the voting measure
\begin{align}
\IP_{\mu} (A) = \int_0^1 p^{|A|} (1 - p)^{N - |A|} ~  d \mu (p)
\end{align}
for $A \subset V_N$ and a measure $\mu$ on $[0,1]$ with $\mu ([\frac{1}{2} + a, \frac{1}{2} + b]) = \mu ([\frac{1}{2} - a, \frac{1}{2} - b])$.
\begin{definition}
We define the Laakso-Taagepera index of the sequence $\{w_n\}$ by
\begin{align}
LT_N = \frac{\sum_{n=1}^N w_n^2}{(\sum_{n=1}^N w_n)^2}
\end{align}
The Laakso-Taagepera index is named after $\cite{LaaksoT}$.\\
We start with a result of Langner $\cite{Langner}$.
\end{definition}
\begin{theorem}
$\label{th: Lang}$
If $LT_N \rightarrow 0$ and $\mu (\{r\}) = 0$ then the efficiency $E_N$ of the voting systems $(V_N, \mathcal{V_N})$ satisfies
\begin{align}
E_N \rightarrow \mu ([r,1])
\end{align}
\end{theorem}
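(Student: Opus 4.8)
The plan is to mimic the proof of Theorem \ref{th: effS}, replacing the unweighted law of large numbers with a weighted version controlled by the Laakso-Taagepera index. Using \eqref{eq:Pmu1} we write
\begin{align}
E_N~ =~ \IP_\mu(\sV_N)~ =~ \int_0^1 P_p\big(w(A)\geq r\,w(N)\big)~d\mu(p),\nt
\end{align}
where under $P_p$ the coordinates $X_i$ are independent Bernoulli$(p)$ and $w(A)=\sum_{i=1}^N w_i X_i$. So the whole matter reduces to understanding, for fixed $p$, the behaviour of
\begin{align}
f_N(p):=P_p\Big(\frac{1}{w(N)}\sum_{i=1}^N w_i X_i\geq r\Big)\nt
\end{align}
as $N\to\infty$, and then passing the limit inside the integral against $\mu$.

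First I would show pointwise convergence $f_N(p)\to \mathbf 1_{\{p>r\}}$ for every $p\neq r$. Under $P_p$ the normalized weighted sum $\frac{1}{w(N)}\sum w_i X_i$ has mean $p$ and variance
\begin{align}
\frac{p(1-p)}{w(N)^2}\sum_{i=1}^N w_i^2~=~p(1-p)\,LT_N~\longrightarrow~0.\nt
\end{align}
Hence by Chebyshev's inequality the weighted sum concentrates at $p$; since $r\neq p$, $f_N(p)\to 1$ if $p>r$ and $f_N(p)\to 0$ if $p<r$. (This is the weighted law of large numbers; the hypothesis $LT_N\to0$ is exactly what makes it work — it is the analogue of $\frac1N\to0$ in the simple case.) The value $p=r$ is the only point of non-convergence, and the assumption $\mu(\{r\})=0$ removes it from consideration.

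Then I would finish by dominated convergence: $0\le f_N(p)\le 1$, $\mu$ is a probability measure, and $f_N\to\mathbf 1_{(r,1]}$ $\mu$-a.e., so
\begin{align}
E_N~=~\int_0^1 f_N(p)~d\mu(p)~\longrightarrow~\int_0^1 \mathbf 1_{(r,1]}(p)~d\mu(p)~=~\mu\big((r,1]\big)~=~\mu\big([r,1]\big),\nt
\end{align}
the last equality again because $\mu(\{r\})=0$.

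The main obstacle is the weighted concentration step: with unequal weights one cannot invoke the classical strong law verbatim, and one must check that $LT_N\to 0$ genuinely forces the variance of the normalized weighted sum to vanish uniformly enough. Chebyshev handles this cleanly once one notices the variance is exactly $p(1-p)\,LT_N$, so I would expect the estimate to be short; the only mild subtlety is that the weights $w_i$ could be zero or the sequence could behave irregularly, but since $LT_N$ is scale-invariant and the bound $p(1-p)\le \tfrac14$ is uniform in $p$, no extra hypothesis beyond $LT_N\to0$ is needed. Everything else (Fubini to justify \eqref{eq:Pmu1}, dominated convergence) is routine.
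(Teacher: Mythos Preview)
Your proposal is correct and follows essentially the same route as the paper: write $E_N$ as $\int_0^1 P_p(\sum w_i X_i\ge r\,w(N))\,d\mu(p)$, show the integrand converges pointwise to $\chi_{\{p>r\}}$ via a concentration bound governed by $LT_N$, and pass to the limit by dominated convergence. The paper in fact records the very same Chebyshev estimate $P_p(|\sum w_i X_i - p\,w(N)|\ge\alpha\,w(N))\le \alpha^{-2}p(1-p)\,LT_N$ that you use, and additionally cites the Hoeffding-based Corollary~\ref{cor:Hoeffding} for the same pointwise convergence; either tool suffices, so there is no substantive difference.
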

For the reader's convenience we reprove this theorem.

\begin{proof}
\begin{align}
   E_{N}~=~\int_{0}^{1} P_{p}\big(\sum_{i=1}^{N} w_{i} X_{i} \geq r \sum_{i=1}^{N} w_{i}\big)\;d\mu(p)
\end{align}
By Corollary \ref{cor:Hoeffding} the integrand converges to $0$ for $r>p$ and to $1$ for $r<p$,
hence
\begin{align}
   E_{N}~\to~\int_{0}^{1} \chi_{\{p>r\}}(p)\,d\mu(p)~=~\mu([r,1])\,.
\end{align}
where
\begin{align*}
   \chi_{p>r}(p)~=~\left\{
                     \begin{array}{ll}
                       1, & \hbox{if $p>r$;} \\
                       0, & \hbox{otherwise.}
                     \end{array}
                   \right.
\end{align*}
We estimate
\begin{align}
& P_p (| \sum w_i x_i - p \sum w_i| \geq \alpha \sum w_i) \notag\\
& \leq \frac{1}{\alpha^2} \frac{\sum_{i} E_p ((\sum w_i (x_i - p))^2)}{(\sum w_i)^2} \notag\\
& = \frac{1}{\alpha^2} p (1 - p) \frac{\sum_i w_i^2}{(\sum w_i)^2} = \frac{p (1 - p)}{\alpha^2} LT_N
\end{align}
It follows that
\begin{align}
E_N = \int_0^1 P_p (\sum w_i x_i \geq r \sum w_i) ~  d \mu (p)
\end{align}
converges to
\begin{align}
\int^1_0 \chi (p > r) ~  d \mu (p) = \mu ([r,1])
\end{align}
\end{proof}
In a similar way we can compute the rate of success in such systems.
\begin{theorem}
$\label{th: SucCMB}$
If $LT_N \rightarrow 0$ and $\mu (\{r\}) = 0$ then the rate of success with respect to $\IP_{\mu}$ satisfies
\begin{align}
S_{\IP_{\mu}}^+ (v) & \rightarrow \int_r^1 p ~  d \mu (p)\\
S_{\IP_{\mu}}^- (v) & \rightarrow \int_{r-1}^1 p ~  d \mu (p)
\end{align}
\end{theorem}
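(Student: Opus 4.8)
The plan is to reduce $S_{\IP_{\mu}}^+(v)$ to the weak law of large numbers in exactly the way Theorem~\ref{th: Lang} was proved. Take $v=1$ without loss of generality. By the definition of the affirmative success rate and \eqref{eq:Pmu1},
\[
 S_{\IP_{\mu}}^+(1) = \IP_{\mu}\big(x_1=1,\ w(A)\ge r\,w(N)\big) = \int_0^1 P_p\big(x_1=1,\ \textstyle\sum_{i=2}^N w_i x_i \ge r\,w(N)-w_1\big)\,d\mu(p);
\]
since the $x_i$ are independent under $P_p$ with $P_p(x_1=1)=p$, the integrand equals $p\cdot P_p\big(\sum_{i=2}^N w_i x_i \ge r\,w(N)-w_1\big)$.

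Next I would show that for each fixed $p\ne r$ this inner probability tends to $\chi_{\{p>r\}}(p)$. Rewrite the event as $\{\frac{1}{w(N)}\sum_{i=2}^N w_i x_i \ge r - w_1/w(N)\}$; since $w_1^2/w(N)^2 \le LT_N \to 0$, the threshold converges to $r$, while the variance of $\sum_{i=2}^N w_i x_i$ under $P_p$ is $p(1-p)\sum_{i=2}^N w_i^2 \le p(1-p)\,LT_N\,w(N)^2$, so by the Chebyshev estimate in the proof of Theorem~\ref{th: Lang} (equivalently Corollary~\ref{cor:Hoeffding}) the average $\frac{1}{w(N)}\sum_{i=2}^N w_i x_i$ converges to $p$ in $P_p$-probability. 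Hence the inner probability tends to $1$ for $p>r$ and to $0$ for $p<r$. The integrand $p\cdot P_p(\cdots)$ is bounded by $1$ and $\mu$ is a probability measure, so dominated convergence gives
\[
 S_{\IP_{\mu}}^+(1) \ \longrightarrow\ \int_0^1 p\,\chi_{\{p>r\}}(p)\,d\mu(p) \ =\ \int_r^1 p\,d\mu(p),
\]
the last step using $\mu(\{r\})=0$.

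For $S_{\IP_{\mu}}^-(1)$ I would run the same argument with $x_1=0$ (which has $P_p$-probability $1-p$) and the event $\{\sum_{i=2}^N w_i x_i < r\,w(N)\}$, obtaining $S_{\IP_{\mu}}^-(1)\to\int_0^r(1-p)\,d\mu(p)$; the substitution $p\mapsto 1-p$ together with the reflection property \eqref{eq:reflect} of $\mu$ rewrites this as $\int_{1-r}^1 p\,d\mu(p)$. Adding the two limits yields the claimed total rate $S_{\IP_{\mu}}(v)\to\int_r^1 p\,d\mu(p)+\int_{1-r}^1 p\,d\mu(p)$.

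The one delicate point is justifying the law-of-large-numbers step after the single voter $1$ has been deleted from the sum: one must check that the relevant Laakso--Taagepera quantity of the truncated sequence $\{w_i\}_{i\ge 2}$ still tends to $0$ — it does, since $\sum_{i\ge2}w_i^2\le\sum_i w_i^2$ and $(w(N)-w_1)^2\sim w(N)^2$ because $w_1/w(N)\to 0$ — and that shifting the threshold by $w_1/w(N)$ is asymptotically negligible, which is the same fact. The remaining ingredients (the factorization over $\{x_1=1\}$ and $\{x_1=0\}$, the uniform bound used for dominated convergence, and discarding the boundary $\{p=r\}$ via $\mu(\{r\})=0$) are routine and parallel the proof of Theorem~\ref{th: Lang}.
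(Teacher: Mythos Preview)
Your proof is correct and follows essentially the same approach as the paper's: express $S^{+}$ via \eqref{eq:Pmu1}, factor out $p$ using independence under $P_{p}$, and invoke the law-of-large-numbers argument of Theorem~\ref{th: Lang} (Corollary~\ref{cor:Hoeffding}) together with dominated convergence. You are in fact more thorough than the paper, which omits the $S^{-}$ case entirely and contains the typo $\int_{r-1}^{1}$ for what should be (as you correctly derive via the reflection property \eqref{eq:reflect}) $\int_{1-r}^{1} p\,d\mu(p)$.
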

\begin{proof}
Without loss we compute the rate of success for voter $i=1$.
\begin{align}
S_{\IP_{\mu}}^+ (1) & = \int_0^1 P_p (x_1 = 1, \sum_{i=2}^N w_i x_i \geq r \sum w_i - w_1) ~  d \mu (p) \notag\\
& = \int_0^1 p P_p (\sum_{i=2}^N w_i x_i \geq r \sum w_i - w_1) ~  d \mu (p) \notag\\
& \rightarrow \int_r^1 p ~  d \mu (p)
\end{align}
\end{proof}
\appendix
\section{Appendix}
For the reader's convenience, in this appendix we present a few results needed in the main text.
In particular, we formulate Hoeffding's inequality.

\begin{theorem}[Hoeffding's Inequality]\label{th:Hoeffding}
 Suppose $X_{i}, i=1,\ldots,N$ are independent random variables such that $X_{i}\in[a_{i},b_{i}]$ almost surely.

 Set $\sigma^2=\sum_{i=1}^{N} (b_{i}-a_{i})^2$. Then
 {\large\begin{align}\label{eq:Hoeffding}
   &\IP\Big(\sum_{i=1}^{N} X_{i}~\geq~\IE(X_{i}) +\lambda\Big)~
   \leq~\,e^{-2\frac{\lambda}{\sigma^2}}\qquad   \text{and}\\
   &\IP\Big(\sum_{i=1}^{N} X_{i}~\leq~\IE(X_{i}) -\lambda\Big)~
   \leq~\,e^{-2\frac{\lambda}{\sigma^2}}
 \end{align}}
\end{theorem}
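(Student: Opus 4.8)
The plan is to use the classical exponential-moment (Chernoff) method. First I would reduce to centred variables: put $Y_i = X_i - \IE(X_i)$, so that $\IE(Y_i)=0$ and $Y_i$ lies almost surely in an interval of the same length $b_i-a_i$ as the one containing $X_i$. Since the event $\{\sum_i X_i \ge \sum_i\IE(X_i)+\lambda\}$ coincides with $\{\sum_i Y_i\ge\lambda\}$, it suffices to bound $\IP(\sum_i Y_i\ge\lambda)$; the second (lower-tail) inequality then follows by applying the first one to the variables $-X_i$, which lie in intervals of the same length.

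For the upper tail, fix $s>0$ and apply Markov's inequality to the non-negative random variable $e^{s\sum_i Y_i}$, using independence to factor the expectation:
\[
\IP\Big(\sum_{i=1}^N Y_i\ge\lambda\Big)\;\le\;e^{-s\lambda}\,\IE\Big(e^{s\sum_i Y_i}\Big)\;=\;e^{-s\lambda}\prod_{i=1}^N\IE\big(e^{sY_i}\big).
\]
The crucial ingredient is Hoeffding's Lemma: if $\IE(Y)=0$ and $Y\in[a,b]$ almost surely, then $\IE(e^{sY})\le e^{s^2(b-a)^2/8}$ for every $s\in\IR$. Granting this, the product above is at most $\exp\big(\tfrac18 s^2\sum_i(b_i-a_i)^2\big)=\exp(\tfrac18 s^2\sigma^2)$, whence $\IP(\sum_i Y_i\ge\lambda)\le\exp(-s\lambda+\tfrac18 s^2\sigma^2)$. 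Minimising the quadratic in $s$ at $s=4\lambda/\sigma^2$ yields the bound $\exp(-2\lambda^2/\sigma^2)$.

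It remains to prove Hoeffding's Lemma, which I expect to be the only genuine obstacle. Convexity of $t\mapsto e^{st}$ on $[a,b]$ gives the pointwise bound $e^{sy}\le\frac{b-y}{b-a}e^{sa}+\frac{y-a}{b-a}e^{sb}$; taking expectations and using $\IE(Y)=0$ (so $a\le 0\le b$) leaves $\IE(e^{sY})\le e^{\varphi(h)}$, where $h=s(b-a)$, $\theta=-a/(b-a)\in[0,1]$, and $\varphi(h)=-\theta h+\log(1-\theta+\theta e^{h})$. One checks $\varphi(0)=\varphi'(0)=0$ and, writing $u(h)=\theta e^{h}/(1-\theta+\theta e^{h})\in(0,1)$, that $\varphi''(h)=u(h)\big(1-u(h)\big)\le\tfrac14$. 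Taylor's theorem then gives $\varphi(h)\le h^2/8=s^2(b-a)^2/8$, as required. Everything apart from this lemma is routine bookkeeping.
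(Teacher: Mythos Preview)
Your argument is the standard and correct proof of Hoeffding's inequality via the Chernoff bound and Hoeffding's Lemma. The paper itself does not prove this theorem at all: it simply states the result and refers the reader to \cite{Pollard}, so your proposal supplies strictly more than the paper does. Note also that your computation correctly yields the exponent $-2\lambda^{2}/\sigma^{2}$, whereas the displayed statement in the paper has the evident misprint $-2\lambda/\sigma^{2}$.
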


For a proof of Theorem \ref{th:Hoeffding} see e.\,g. \cite{Pollard}.

An immediate consequence of Hoeffding's inequality is the following proposition.
As before $P_{p}$ with $0\leq p\leq 1$ denotes the probability measure on $\{0,1\}^{N}$ given by:
{\large\begin{align}
   P_{p}\big(x_{1},x_{2},\ldots,x_{N}\big)~=~p^{\,\sum x_{i}}\,\big(1-p\big)^{N-\sum x_{i}}
\end{align}}
and $E_{p}$ denotes expectation with respect to $P_{p}$.

\begin{proposition}\label{prop:Pweight}
   Let $X_{i}, i=1,\ldots,N$ be random variables with distribution $P_{p}$ and $w_{1},\ldots, w_{N}\in
   [0,\infty)$, then for $\lambda\geq 0$ {\large
   \begin{align}\label{eq:Pweight}
      P_{p}\Big(\big|\sum_{i=1}^{N}w_{i} X_{i}-p \sum_{i=1}^{N}w_{i}\big|\geq \alpha\sum_{i=1}^{N} w_{i}\Big)
      \leq 2\,e^{{-2\alpha^{2}\, \frac{(\sum w_{i})^{2}}{\sum w_{i}^{2}}}}
   \end{align}}
\end{proposition}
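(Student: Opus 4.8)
The plan is to deduce \eqref{eq:Pweight} directly from Hoeffding's inequality (Theorem \ref{th:Hoeffding}), applied not to the $X_i$ themselves but to the rescaled variables $Y_i := w_i X_i$. The first step is to verify the hypotheses: since each $X_i$ takes only the values $0$ and $1$ and $w_i \geq 0$, the variable $Y_i$ lies almost surely in the interval $[a_i,b_i] = [0,w_i]$, and under $P_p$ the $Y_i$ are independent because the $X_i$ are. Hence $b_i - a_i = w_i$ and, in the notation of Theorem \ref{th:Hoeffding}, $\sigma^2 = \sum_{i=1}^N (b_i-a_i)^2 = \sum_{i=1}^N w_i^2$.

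Next I would record the centering: $E_p(Y_i) = w_i\,E_p(X_i) = p\,w_i$, so that $\sum_{i=1}^N E_p(Y_i) = p\sum_{i=1}^N w_i$. Taking the deviation parameter $\lambda := \alpha\sum_{i=1}^N w_i \geq 0$ in \eqref{eq:Hoeffding}, the two one-sided bounds of Theorem \ref{th:Hoeffding} become
\begin{align}
   P_p\Big(\sum_{i=1}^N w_i X_i \geq p\sum_{i=1}^N w_i + \alpha\sum_{i=1}^N w_i\Big) &\leq e^{-2\lambda^2/\sigma^2} = e^{-2\alpha^2 (\sum w_i)^2/\sum w_i^2}, \\
   P_p\Big(\sum_{i=1}^N w_i X_i \leq p\sum_{i=1}^N w_i - \alpha\sum_{i=1}^N w_i\Big) &\leq e^{-2\alpha^2 (\sum w_i)^2/\sum w_i^2}.
\end{align}
The event whose probability is bounded in \eqref{eq:Pweight} is exactly the union of these two events, so adding the two estimates (a union bound) produces the factor $2$ and the claimed exponent.

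There is no genuinely hard step here; the only points that deserve a sentence of care are, first, that the constant $2$ on the right of \eqref{eq:Pweight} comes precisely from combining the two tails and is not an artifact to be avoided, and second, the degenerate case in which $\sum_{i=1}^N w_i^2 = 0$, i.\,e. all $w_i = 0$, where the fraction $(\sum w_i)^2/\sum w_i^2$ is ill-defined; one simply assumes $\sum_{i=1}^N w_i^2 > 0$, since otherwise the left-hand side of \eqref{eq:Pweight} equals $0$ for $\alpha > 0$ and $1$ for $\alpha = 0$ and the statement is trivial or vacuous. It is worth noting that the resulting exponent equals $2\alpha^2/LT_N$ in terms of the Laakso--Taagepera index of the weights, which is exactly the form in which this estimate is used in the proofs of Theorems \ref{th: Lang} and \ref{th: SucCMB}.
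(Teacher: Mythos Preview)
Your proof is correct and follows exactly the paper's approach: set $Y_i=w_iX_i$, note independence and $Y_i\in[0,w_i]$ with $E_p(Y_i)=p\,w_i$, and invoke Theorem~\ref{th:Hoeffding}. You supply more detail than the paper (the explicit choice $\lambda=\alpha\sum w_i$, the union bound giving the factor $2$, and the degenerate case $\sum w_i^2=0$), but the underlying argument is identical.
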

\begin{proof}
   The random variables $Y_{i}=w_{i}X_{i}$ are independent (under $P_{p}$) and take values in $[0,w_{i}]]$. Moreover $E_{p}(Y_{i})=p w_{i}$. Thus \eqref{eq:Pweight} follows from
   Theorem \ref{th:Hoeffding}.
\end{proof}

\begin{corollary}\label{cor:Hoeffding}
   Suppose the Laakso-Taagepera index $LT_{N}=\frac{\sum w_{i}^{2}}{(\sum w_{i})^{2}}$ goes to zero
   as $N\to\infty$ then
   \begin{align}
      \text{If } \alpha>p\quad &P_{p}\Big(\sum_{i=1}^{N}w_{i} X_{i}~\geq \alpha\,\sum_{i=1}^{N}w_{i}\Big)~\to~0\qquad\text{as }~N\to \infty\\
      \text{If } \alpha<p\quad &P_{p}\Big(\sum_{i=1}^{N}w_{i} X_{i}~\leq \alpha\,\sum_{i=1}^{N}w_{i}\Big)~\to~0\qquad\text{as }~N\to \infty
   \end{align}
\end{corollary}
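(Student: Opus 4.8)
The plan is to deduce both statements directly from Proposition \ref{prop:Pweight} by recentering the weighted sum $\sum w_i X_i$ around its $P_p$-mean $p\sum w_i$. The point is that the event $\{\sum w_i X_i \geq \alpha\sum w_i\}$ with $\alpha>p$ is a one-sided large-deviation event whose deviation size is a \emph{fixed} positive multiple $(\alpha-p)$ of $\sum w_i$, so the Hoeffding-type bound \eqref{eq:Pweight} applies with $(\alpha-p)$ in the role of the parameter there, giving an exponential bound whose exponent is $-2(\alpha-p)^2/LT_N\to-\infty$.

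First I would treat the case $\alpha>p$. Since $X_i\in\{0,1\}$, one has $\sum w_i\geq 0$, and writing $\beta=\alpha-p>0$,
\begin{align}
P_p\Big(\sum_{i=1}^N w_i X_i \geq \alpha\sum_{i=1}^N w_i\Big)
= P_p\Big(\sum_{i=1}^N w_i X_i - p\sum_{i=1}^N w_i \geq \beta\sum_{i=1}^N w_i\Big)
\leq P_p\Big(\Big|\sum_{i=1}^N w_i X_i - p\sum_{i=1}^N w_i\Big| \geq \beta\sum_{i=1}^N w_i\Big).
\end{align}
Applying Proposition \ref{prop:Pweight} with $\alpha$ replaced by $\beta$ bounds the last quantity by $2\,e^{-2\beta^2/LT_N}$. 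Since $LT_N\to 0$ by hypothesis, $1/LT_N\to\infty$, hence $2\,e^{-2(\alpha-p)^2/LT_N}\to 0$, which is the claim.

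The case $\alpha<p$ is symmetric: with $\gamma=p-\alpha>0$,
\begin{align}
P_p\Big(\sum_{i=1}^N w_i X_i \leq \alpha\sum_{i=1}^N w_i\Big)
= P_p\Big(p\sum_{i=1}^N w_i - \sum_{i=1}^N w_i X_i \geq \gamma\sum_{i=1}^N w_i\Big)
\leq P_p\Big(\Big|\sum_{i=1}^N w_i X_i - p\sum_{i=1}^N w_i\Big| \geq \gamma\sum_{i=1}^N w_i\Big),
\end{align}
and Proposition \ref{prop:Pweight} again bounds this by $2\,e^{-2(p-\alpha)^2/LT_N}\to 0$. I do not expect any serious obstacle here; the only points requiring a word of care are that $\sum w_i>0$ (implicitly needed for $LT_N$ to be defined, and automatic once the weights are not all zero) and that the strict inequalities $\alpha>p$ and $\alpha<p$ are precisely what make the deviation parameter strictly positive, so the exponent genuinely tends to $-\infty$.
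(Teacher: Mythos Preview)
Your proof is correct and is exactly the argument the paper has in mind: the corollary is stated immediately after Proposition~\ref{prop:Pweight} without a separate proof precisely because it follows by applying \eqref{eq:Pweight} with the deviation parameter $|\alpha-p|>0$, so that the bound $2\,e^{-2(\alpha-p)^{2}/LT_{N}}\to 0$ as $LT_{N}\to 0$. Your remarks about needing $\sum w_{i}>0$ and the strict inequality $\alpha\neq p$ are the only subtleties, and you handle them correctly.
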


\end{document}